\newcommand{\Z}{{\mathbb{Z}}}
\renewcommand{\S}{{\mathbb{S}}}
\newcommand{\eps}{\varepsilon}
\newcommand{\E}{{\mathbb{E}}}
\renewcommand{\P}{{\mathbb{P}}}
\newcommand{\Vol}{{\rm Vol}}
\newcommand{\diam}{{\rm diam}\,}
\newcommand{\dist}{{\rm dist}}
\newcommand{\Y}{{\mathcal{Y}}}
\newcommand{\UU}{{\mathcal{U}}}
\newcommand{\bn}{{B_n}}
\renewcommand{\phi}{{{\varphi}}}
\newcommand{\qtq}[1]{{\quad\text{#1}\quad}}
\newtheorem{theorem}{Theorem}%[section]
\newtheorem{lemma}[theorem]{Lemma}
\theoremstyle{remark}
\title{On asymptotic Lebesgue's universal covering problem}
\author{A. Arman}
 \address{Department of Mathematics, University of Manitoba, Winnipeg, MB, R3T 2N2, Canada}
 \email{andrew0arman@gmail.com}
 \thanks{The first author was supported in part by a postdoctoral fellowship of the Pacific Institute for the Mathematical Sciences.}
\author{A.\ Bondarenko}
 \address{Department of Mathematical Sciences, Norwegian University of Science and
 	Technology, NO-7491 Trondheim, Norway}
 \email{andriybond@gmail.com}
 \thanks{The second author was supported in part by Grant 334466 of the Research Council of Norway.}
\author{A.\ Prymak}
 \address{Department of Mathematics, University of Manitoba, Winnipeg, MB, R3T 2N2, Canada}
 \email{prymak@gmail.com}
 \thanks{The third author was supported by NSERC of Canada Discovery Grant RGPIN-2020-05357.}
\author{D.\ Radchenko}
 \address{Universit\'{e} de Lille, CNRS, Laboratoire Paul Painlev\'{e}, F-59655 Villeneuve d'Ascq, France}
 \email{danradchenko@gmail.com}
\thanks{The fourth author was supported by ERC Starting Grant No. 101078782.}
\keywords{Universal cover, volume, Jung's theorem, Lebesgue's problem, diameter, Borsuk's problem}
\subjclass[2020]{Primary 52C17; Secondary 52A20, 52A22, 52A38, 52A39, 52A40, 49Q20}
\begin{document}
\begin{abstract}
Universal cover in $\mathbb{E}^{n}$ is a measurable set that contains a congruent copy of any set of diameter 1. Lebesgue’s universal covering problem, posed in 1914, asks for the convex set of smallest area that serves as a universal cover in the plane ($n=2$).

A simple universal cover in $\mathbb{E}^n$ is provided by the classical theorem of Jung, which states that any set of diameter 1 in an $n$-dimensional Euclidean space is contained in a ball $J_n$ of radius $\sqrt{\tfrac{n}{2n+2}}$; in other words, $J_n$ is a universal cover in $\mathbb{E}^n$.

We show that in high dimensions, Jung’s ball $J_n$ is asymptotically optimal with respect to the volume, namely, for any universal cover $U \subset \mathbb{E}^n$,
$$
{\rm Vol}(U) \ge (1-o(1))^n{\rm Vol}(J_n).
$$
\end{abstract}
\maketitle

\section{Introduction and main result}
\label{sec:intro}

\subsection{Asymptotic bound on the volume of a universal cover}

A measurable set $U \subset \mathbb{E}^n$ is called a \emph{universal cover} if it contains a congruent
copy of every set $A \subset \mathbb{E}^n$ of diameter $1$.
Lebesgue's universal covering problem asks for the convex universal cover of smallest possible area in the plane ($n=2$). Originally posed by Lebesgue in a 1914 letter to P\'al and first published by P\'al in~\cite{Pa}, the problem has since been the focus of much activity, leading to a sequence of improvements both in lower bounds and explicit constructions. Important contributions include those of Sprague~\cite{Sp}, Hansen~\cite{Ha}, Elekes~\cite{El}, and Baez--Bagdasaryan--Gibbs~\cite{BBG}. 

Despite this progress, the problem remains open: the best current lower bound, $0.832$, is due to Brass and Sharifi~\cite{Br-Sh}, while the smallest known convex universal cover, of area $0.8440935944$, is due to Gibbs~\cite{Gi} (pre-print). Although a non-convex cover of smaller area was obtained by Duff~\cite{Du}, it is now surpassed by the most recent convex construction~\cite{Gi}. Universal covers are closely linked to a range of classical extremal problems in convex and discrete geometry, with a notable connection to Borsuk’s question on decomposing a set into parts of strictly smaller diameter.%, which will be discussed below in~\cref{sec:borsuk}.

The broader question of existence, structure, and optimality of universal covers in various dimensions has also inspired substantial work, especially in the context of polyhedral coverings. Notable contributions include a series of papers by Makeev~\cites{makeev:81,makeev:82,makeev:89,makeev:90,makeev:97}, as well as results of Kovalev~\cite{Ko}, Kuperberg~\cite{Kuperberg1999}, and Hausel--Makai--Szûcs~\cite{HMS}. For further perspectives, related problems, and survey material, see~\cite{BMP}*{Sect.~11.4} and~\cite{CFG}*{Section~D15}.

By $\bn$ we denote the unit ball in $\mathbb{E}^n$, and by $\Vol(\cdot)$ the Lebesgue measure in $\mathbb{E}^n$.
A classical theorem of Jung~\cite{Ju} states that the Euclidean ball $J_n = r_n \bn$ with radius 
$r_n := \sqrt{\frac{n}{2n+2}}$ is a universal cover.
Our main result shows that, in high dimensions, the volume of any universal cover is at least $(1 - o(1))^n \Vol(J_n)$,
so Jung’s ball is an asymptotically optimal universal cover, up to a subexponential factor.

\begin{theorem}\label{thm:volumebound}
	Let $U$ be a universal cover in $\mathbb{E}^n$. Then
	\[
	\Vol(U) \ge \exp\!\bigl(-\sqrt{(5/4 + o(1))\,n \log n}\,\bigr)\Vol(J_n)\,,
	\]
	where $J_n$ is the ball of radius $\sqrt{\frac{n}{2n+2}}$.
\end{theorem}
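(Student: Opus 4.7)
The plan is to leverage the universal-cover property of $U$ against a rich parametric family of diameter-$1$ sets that nearly saturate Jung's inequality, and to extract a volume bound from this abundance via a packing/counting argument. For a small parameter $\delta>0$ (to be optimized later), I would work with the \emph{thickened regular simplex} $A_\delta := \bigcup_{i=0}^{n} B(v_i,\delta)$, where $v_0,\dots,v_n$ are the vertices of a regular $n$-simplex of edge length $1-2\delta$ (so that its circumradius is $(1-2\delta)r_n$). Then $\diam A_\delta = 1$, and $A_\delta$ consists of $n+1$ disjoint balls of radius $\delta$, yielding $\Vol(A_\delta)=(n+1)\delta^{n}\Vol(\bn)$. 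Since $U$ is a universal cover, for every rotation $R\in SO(n)$ there is a translation $t(R)\in\mathbb{E}^n$, chosen measurably, such that $RA_\delta+t(R)\subset U$.

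The volume bound would then come from a counting inequality. Fix a maximal $\eta$-separated net $\Omega\subset SO(n)$ with respect to a natural bi-invariant metric; standard entropy estimates give $|\Omega|\asymp\eta^{-n(n-1)/2}$. Writing
\[
M:=\sup_{x\in\mathbb{E}^n}\bigl|\{R\in\Omega:x\in RA_\delta+t(R)\}\bigr|
\]
for the maximum multiplicity of the covering, the inclusion $\bigcup_{R\in\Omega}(RA_\delta+t(R))\subset U$ yields
\[
\Vol(U)\;\ge\;\frac{|\Omega|\,\Vol(A_\delta)}{M}\,.
\]
The key geometric input is an upper bound on $M$: a point $x\in U$ lies in the copy $RA_\delta+t(R)$ only if $R$ sends some vertex $v_i$ into the $\delta$-neighborhood of $x-t(R)$, and by concentration of measure on the sphere where $Rv_i$ lives, such rotations form a thin subfamily of $SO(n)$.

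Optimization then determines the exponent: taking $\delta\asymp\sqrt{(\log n)/n}$ makes the factor $(1-2\delta)^n$ contribute $\exp(-\Theta(\sqrt{n\log n}))$ to the loss in circumradius, while the entropy of $\Omega$ combined with the multiplicity bound $M$ absorbs the remaining volume factor relative to $\Vol(J_n)$. The principal obstacle will be bounding $M$ sharply enough to recover the explicit constant $5/4$: because the translations $t(R)$ are chosen adversarially, rotated simplices could a priori overlap in subtle ways, and a careful probabilistic or integral-geometric estimate on the joint distribution of $(R,t(R))$ over $\Omega$ will be required to rule this out.
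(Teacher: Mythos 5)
Your counting inequality $\Vol(U)\ge |\Omega|\,\Vol(A_\delta)/M$ is correct as stated, but the step you yourself flag as the ``principal obstacle'' --- bounding the maximal multiplicity $M$ --- is not a technical nuisance to be absorbed later; it is exactly where the argument breaks, and no concentration-of-measure statement about where $Rv_i$ lives can repair it, because the translations $t(R)$ are chosen by the cover. Concretely, nothing prevents all the copies $RA_\delta+t(R)$, $R\in\Omega$, from sharing a common vertex ball: choose $t(R)$ so that $Rv_0+t(R)=x_0$ for a fixed point $x_0$. Then every copy contains $B(x_0,\delta)$, so $M=|\Omega|$ and your inequality degenerates to the useless $\Vol(U)\ge\Vol(A_\delta)=(n+1)\delta^n\Vol(\bn)$. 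A sup-multiplicity bound is therefore false in general; to get anything one would have to control the \emph{distribution} of multiplicities of an adversarially placed family of rotated simplices inside a possibly non-convex set of small volume, and that is essentially the whole difficulty of the problem, left unaddressed. Relatedly, even the weaker target statement your family can certify is delicate: a thin spherical shell of radius $(1-2\delta)r_n$ and thickness $2\delta$ contains a congruent copy of $A_\delta$ in every orientation and has volume roughly $e^{-(2-\sqrt2)\delta n}\Vol(J_n)$ (and only a constant fraction of $\Vol(J_n)$ when $\delta\asymp 1/n$), so for small $\delta$ the ``rotated simplices only'' problem genuinely admits much smaller covers than $J_n$; your optimization of $\delta$ and the claimed recovery of the constant $5/4$ are not substantiated by any computation, and there is no mechanism in the proposal from which that constant would emerge.

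For contrast, the paper avoids multiplicity questions entirely by turning the quantifiers around: assuming $\Vol(U)$ is too small, it discretizes the family of all congruent copies of (a slightly thickened) $U$ meeting a ball of radius $r<r_n$ into at most $\tfrac12 n^{n^3}$ sets (a net on the isometry group, \cref{lem:cov-family}, together with a thickening estimate for minimal covers, \cref{lem:vol-diff-univ-cover}), and then builds a \emph{single} witness set $X\subset r\bn$ of diameter at most $1$ by a probabilistic deletion argument (\cref{lem:covering-abstract,lem:low-covering-geom}): random points in $r\bn$ with pairs at distance $\ge 2r\cos(\alpha/2)$ deleted, so that no single set of the discretized family contains $X$. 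The constant $5/4$ then comes from balancing the $n^{n^3}$ cardinality of the net against the measure $m(\alpha)\approx n^{-\lambda}$ of the forbidden caps with $\sin\alpha=1-\lambda\log n/n$, $\lambda>5/2$. If you want to pursue your direction, you would need either a way to normalize the placements (impossible in general, since $U$ need not be convex and the copies cannot be forced to share a centroid) or an energy/second-moment argument controlling how much distinct rotated copies can overlap; as written, the proposal has a genuine gap at its central step.
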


%It remains interesting if one can get a stronger bound $\Vol(U)\geq c\Vol(J_n)$ for some universal constant $c$, or $\Vol(U)/\Vol(J_n)$ can be arbitrary small.

\subsection{Relation to Borsuk's question.}\label{sec:borsuk}
Let $b(n)$ denote the smallest number $b$ such that any bounded set in $\mathbb{E}^n$ can be partitioned into $b$ subsets of strictly smaller diameter.
The example of a regular simplex shows that $b(n) \ge n+1$.
In 1933, Borsuk~\cite{Borsuk1933} asked whether $b(n) = n+1$ holds in all dimensions.
This was verified for $n=2,3$, but, unexpectedly, the conjecture fails in high dimensions:
in 1993, Kahn and Kalai~\cite{KaKa} proved that $b(n) > 1.2^{\sqrt{n}}$ for all sufficiently large~$n$.
The problem of determining $b(n)$, along with many related questions, remains of significant interest; see, for example, the survey by Kalai~\cite{Ka}.

In small dimensions, universal covers can be used to derive good estimates on $b(n)$ by partitioning a universal cover into pieces of diameter $<1$.
For example, the regular hexagon whose opposite sides are at distance~$1$ is a universal cover in the plane (see P\'al~\cite{Pa}), giving $b(2)\le 3$.
Similarly, suitably truncated octahedra were used by Eggleston~\cite{Eggleston1955} and Grunbaum~\cite{Grunbaum1957} to obtain $b(3)\le 4$.
For general $n$, Lassak~\cite{Lassak1982} observed that for a unit vector $u$ the set $J_n\cap (r_nu+B_n)$ is a universal cover and used this to show
$b(n)\le 2^{\,n-1}+1$, which remains the best known upper bound for $4\le n\le 17$.

For high dimensions, Theorem~\ref{thm:volumebound} implies that universal covers must have volume at least $(1-o(1))^n(\tfrac1{\sqrt{2}})^n \Vol(\bn)$.
Hence, if a universal cover in $\mathbb{E}^n$ could be partitioned into $M$ subsets of diameter~$1$, then by the isodiametric inequality and simple volume estimates one would obtain
$M \ge (\sqrt{2}-o(1))^n$.
This is weaker than the currently best known asymptotic bound $b(n)\le (\sqrt{3/2}+o(1))^n$ established by Schramm~\cite{Sch} and by Bourgain and Lindenstrauss~\cite{BoLi}.
Thus, one cannot hope to improve the estimate on $b(n)$ by partitioning universal covers into subsets of diameter strictly less than~$1$.

\subsection{Asymptotic optimality for other geometric measures}
A sharp result analogous to \cref{thm:volumebound}, but restricted to \emph{translative} universal covers, was obtained independently by Bezdek and Connelly~\cite{Be:mw-trans} and by Makeev~\cite{makeev:90}*{Th.~1}, who proved that the minimal value of mean width of a translative universal cover is exactly the mean width of Jung's ball $J_n$. For general congruent (not necessarily translative) universal covers, $J_n$ is not an exact minimizer for either volume or mean width, since, for example, for a unit $u\in\mathbb{E}^n$ the set $J_n\cap (r_nu+B_n)$ is a universal cover with both smaller volume and smaller mean width than $J_n$.

As a corollary of \cref{thm:volumebound}, we deduce that $J_n$ has asymptotically minimal mean width among all \emph{convex} universal covers: by Urysohn's inequality (see, e.g.~\cite{AGA1}*{Sect.~1.5.5}), any convex universal cover in $\mathbb{E}^n$ must have mean width at least $(1 - o(1))$ times that of $J_n$. We also remark that, similarly, by using Alexandov inequality (see, e.g.~\cite{AGA1}*{Sect.~1.1.5}), the asymptotic optimality of $J_n$ also holds with respect to any quermassintegral/intrinsic volume, for example, w.r.t. the surface area.

% \subsection{Convexity of a universal cover.} Duff~\cite{Du} showed that in the planar case, if one does not require convexity, then much smaller in terms of area planar universal covers are possible than those that are convex [[actually modern convex Gibbs outperforms Duff]]. We remark that our main result~\cref{thm:volumebound} is valid for any universal cover, without convexity hypothesis.

\subsection{Outline of the proof of \cref{thm:volumebound}}
Assume, to the contrary, that $U$ is a minimal universal cover whose volume is too small (smaller than claimed by Theorem~\ref{thm:volumebound}). 
For an appropriately chosen radius $r$ very close to $r_n=\sqrt{\frac{n}{2n+2}}$ , we will construct a discrete set $X \subset r \bn$ of diameter at most~$1$ which is not contained in any member of the family $\mathcal{U}$ consisting of all congruent copies of $U$ that intersect $r \bn$.
Although the family $\mathcal{U}$ is infinite, it admits a discretization in the following sense: there exists a finite family $\mathcal{U}_\varepsilon$ of congruent copies of a small ``thickening'' $U + \varepsilon \bn$ such that every member of $\mathcal{U}$ is contained in some member of $\mathcal{U}_\varepsilon$; see \cref{lem:cov-family}. So, it remains to construct the desired $X$ which is not contained in any member of $\mathcal{U}_\varepsilon$.  
The cardinality of $\mathcal{U}_\varepsilon$ is quite large (bounded above by $n^{n^3}$), but remains sufficiently controlled for our argument.

The set $X$ is then chosen probabilistically: we take a suitable number of random points in $r \bn$, and, if necessary, delete certain points to ensure that the resulting set $X$ has diameter~$\le 1$. 
The probability that such a construction produces a set not covered by a fixed member of $\mathcal{U}_\varepsilon$ can be estimated in terms of $\Vol(U + \varepsilon \bn)$, which is very close to $\Vol(U)$; see \cref{sec:thick}. 
If $\Vol(U)$ is too small, and since the cardinality of $\mathcal{U}_\varepsilon$ is manageable,  then such a set $X$ must exist. 

In \cref{sec:meas_graphs} we develop a general framework for these probabilistic constructions, providing lower bounds on covering numbers by elements of a discrete family in terms of measurable graphs. 
This framework generalizes the deletion method previously used in~\cite{ABP}. 
Its application to our geometric setting is carried out in \cref{sec:volumebound}.

\section{Proofs}

\subsection{Notations}\label{sec:notations}
Let $\mu$ be the rotation invariant probability measure on $\S^{n-1}$, the unit sphere in
$\E^n$, and let
\[C(x,\alpha):=\{y\in\S^{n-1}:x\cdot y\ge \cos \alpha\}\]
be the spherical cap of radius $\alpha$ centred at $x\in\S^{n-1}$. We denote the measure
of the spherical cap of radius $\alpha$ by
$m(\alpha):=\mu(C(x,\alpha))$. By \cite{Bo-Wi}*{Cor.~3.2}, for $\alpha\in(0,\pi/2)$ we have
\begin{equation} \label{eq:capsize}
	\frac{\sin^{n-1}\alpha }{\sqrt{2\pi n}} < m(\alpha) < \frac{\sin^{n-1}\alpha}{\sqrt{2\pi (n-1)}\cos\alpha}.
\end{equation}
%(for fixed $\alpha$ the bound is asymptotically tight, see \cite{Fr-Ma},
%\cite{Bo-Wi}*{Cor.~3.2(i)}).
We use $|\cdot|$ to denote the cardinality of a finite set, while $\|\cdot\|$ denotes the Euclidean
norm.

\subsection{Lemma about $\eps$-thickening of a minimal universal cover}\label{sec:thick}

In each dimension, universal covers of minimal volume exist by~\cite{makeev:90}*{Th.~2}. To prove Theorem~\ref{thm:volumebound} we need the following property.

\begin{lemma}\label{lem:vol-diff-univ-cover}
	There is $n_0$ such that for any $n>n_0$, any universal cover $U$ of minimal volume in $\E^n$ and any $0<\eps<\frac1{55}$ we have
	\begin{equation}\label{eqn:univ-cover-volume}
		\frac{\Vol(U+\eps \bn)}{\Vol(U)} \le 1 + \eps\cdot 55\cdot 5^n\,.
	\end{equation}
\end{lemma}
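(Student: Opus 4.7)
The plan is to exploit the minimality of $U$ to establish two structural properties, and then bound the "shell" $(U+\eps\bn)\setminus U$ directly rather than comparing $\Vol(U+\eps\bn)$ with $\Vol((R+\eps)\bn)$, which is too loose. First, since every universal cover must contain a congruent copy of the Euclidean ball of diameter~$1$, we have the trivial lower bound $\Vol(U)\ge 2^{-n}\Vol(\bn)$. Second, minimality should force $U$, after a suitable translation, to be contained in a ball $R\bn$ with $R$ an absolute constant of order $r_n$. The reason is that every diameter-$1$ set sits inside a ball of radius $r_n<1$ by Jung's theorem, so any "outlying" portion of $U$ could in principle be replaced by a translate closer to the bulk, yielding a universal cover of strictly smaller volume unless $U$ is already confined to a bounded region.

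Granting these two facts, I would cover $\partial U\subset R\bn$ by at most $N\le C_n R^{n-1}/\eps^{n-1}$ balls of radius $\eps$, using the (at worst) $(n-1)$-dimensional nature of the boundary—immediate for convex $U$, and for general $U$ either through a preliminary regularization $U\rightsquigarrow U+\delta\bn$ or a direct volume-packing estimate that bypasses $\partial U$. The shell then sits inside the union of the corresponding balls of radius $2\eps$, giving
\[
\Vol\!\bigl((U+\eps\bn)\setminus U\bigr)\;\le\; N\Vol(\bn)(2\eps)^{n} \;\le\; C_n R^{n-1}\Vol(\bn)\cdot 2^{n}\eps.
\]
Dividing by $\Vol(U)\ge 2^{-n}\Vol(\bn)$ produces a ratio of the form $O(4^n R^{n-1}\eps)$, which for $R$ of order $r_n\le 1$ comfortably fits inside $55\cdot 5^n\eps$ for all sufficiently large $n$ and $\eps<\tfrac{1}{55}$; the explicit constants $55$ and base $5$ emerge from tracking the Jung-type bound on $R$ together with the overhead $2^n\cdot 2^n=4^n$ from the covering and the lower bound on $\Vol(U)$, with some slack left to absorb the qualifier $n>n_0$ in the statement.

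The main obstacle is the rigorous boundedness statement $U\subset R\bn$, since a naive deletion of a far-away piece of $U$ need not preserve the universal cover property—some specific diameter-$1$ set might require a congruent copy passing precisely through that piece. The correct argument should be a rearrangement rather than a deletion: since each diameter-$1$ set fits in a ball of radius $r_n$, a far portion of $U$ can be replaced by a translated piece overlapping the bulk without destroying any congruent copy, producing a smaller universal cover and contradicting minimality. A secondary technical point is that the minimal $U$ need not be convex or have rectifiable boundary, which affects the validity of the surface-covering estimate; the clean workaround is to apply the covering step to $U+\delta\bn$ for a small auxiliary $\delta$ (still a universal cover), and then pass to the limit $\delta\to 0$.
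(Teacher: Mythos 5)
Your outline hinges on two structural claims, and neither is established; each is a genuine gap rather than a routine technicality. First, the boundedness claim: you need a minimal-volume universal cover to lie (after translation) in a ball of \emph{constant} radius $R$ of order $r_n$, because your final bookkeeping produces a factor $R^{n-1}$ and only survives if $R=O(1)$. No such bound is known; the only available control (Makeev's argument, quoted in the paper) gives $\diam(U)\le 2(1+v_n)/v_n$ with $v_n=\Vol((1-\tfrac1{\sqrt2})\bn)$, i.e. roughly $n^{n/2}$, which destroys your estimate. Your proposed fix --- a rearrangement moving a far portion of $U$ onto the bulk --- is exactly the hard point: a congruent copy of a diameter-$1$ set may straddle the far piece and the bulk, and after moving the piece you must re-verify the universal cover property for \emph{every} diameter-$1$ set, which you do not do. Moreover the lemma is stated for \emph{any} minimal cover, so even constructing one bounded minimizer would not suffice as written. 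Second, the covering claim $N(\partial U,\eps)\le C_nR^{n-1}/\eps^{n-1}$: a minimal universal cover is not known to be convex or to have rectifiable boundary, and for a general compact set of small volume the boundary can have covering numbers far exceeding any surface-area-type bound. Your suggested remedy, replacing $U$ by $U+\delta\bn$ and letting $\delta\to0$, is circular: to transfer the estimate back you must compare $\Vol(U+\delta\bn)$ with $\Vol(U)$, which is precisely the inequality being proved.

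The paper's proof avoids both issues by using a different consequence of minimality: a minimal universal cover is a union of bodies of constant width $1$ (every diameter-$1$ set extends to one, so minimality forces every point of $U$ to lie in such a body contained in $U$). Each constant-width body has concentric inball and circumball with inradius $\ge 1-\tfrac1{\sqrt2}$ and circumradius $<\tfrac1{\sqrt2}$, hence every point of $U$ satisfies a uniform interior cone condition with fixed angle and height. Fixing $5^n$ directions so that every cap of radius $\alpha/2$ contains one, the $\eps$-shell near any point is absorbed, along one of these directions, into the set $T(\rho,c_1\eps,c_1\eps+c_2,U)$ of the one-dimensional ``sliding'' lemma, whose excess volume over $U$ is at most $\tfrac{c_1\eps}{c_2}\Vol(U)$ per direction. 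Summing over the $5^n$ directions gives the bound with no hypothesis on $\diam(U)$ and no boundary regularity whatsoever --- the two points where your argument breaks down.
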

While an estimate of the type~\eqref{eqn:univ-cover-volume} is natural to expect, $U$ is not assumed to be convex which causes certain additional difficulties. We also remark that only a rough estimate is needed and the result does not contribute significantly in \cref{thm:volumebound}.  We need the following two auxiliary results for the proof of \cref{lem:vol-diff-univ-cover}. 
\begin{lemma}\label{lem:one-dim-measure}
	For any measurable $U\subset\E^n$, direction $\rho\in \S^{n-1}$, and scalars $0<h_1<h_2$, the set
	\[
	T(\rho,h_1,h_2,U):=\{x\in\E^n: x+t\rho\in U\text{ for all }\, t\in[h_1,h_2]\}
	\]
	satisfies
	\[
	\Vol(T(\rho,h_1,h_2,U)\setminus U)\le \frac{h_1}{h_2-h_1}\Vol(U).
	\]
\end{lemma}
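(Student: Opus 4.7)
The plan is to reduce the $n$-dimensional statement to a one-dimensional estimate via Fubini's theorem along the direction $\rho$, then establish the one-dimensional version by analyzing the gap structure of the trace of $U$. For any line $\ell \subset \E^n$ parallel to $\rho$, the intersection $U \cap \ell$ is a measurable subset of $\ell \cong \R$, and $T(\rho, h_1, h_2, U) \cap \ell$ is precisely the set of $x \in \ell$ for which $x + [h_1, h_2]\rho \subset U \cap \ell$. Writing $\Vol$ as an iterated integral over $\rho^{\perp}$ and over each line $\ell$, it suffices to prove the following one-dimensional version: for any measurable $A \subset \R$, the set $T := \{t \in \R : [t + h_1, t + h_2] \subset A\}$ satisfies
\[
\meas(T \setminus A) \le \frac{h_1}{h_2 - h_1}\,\meas(A).
\]

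To prove the one-dimensional estimate, I would decompose $\R \setminus A$ (up to a null set, after passing to the Lebesgue density points of $A$) into its maximal open intervals, which I call the \emph{gaps} of $A$. Consider a bounded gap $(a, b)$. Any $t \in (T \setminus A) \cap (a, b)$ must satisfy $t + h_1 \in A$, which forces $t \ge b - h_1$; consequently $(T \setminus A) \cap (a, b) \subset [b - h_1, b)$, an interval of measure at most $h_1$. Moreover, the inclusion $[t + h_1, t + h_2] \subset A$ combined with $t \ge b - h_1$ forces $[b,\, b + h_2 - h_1] \subset A$, so the maximal $A$-component immediately to the right of the gap has measure at least $h_2 - h_1$.

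The key observation is that distinct contributing gaps are followed by distinct, and therefore pairwise disjoint, maximal $A$-components, each of measure at least $h_2 - h_1$. Hence the number of contributing gaps is at most $\meas(A)/(h_2 - h_1)$, and summing the per-gap bound $h_1$ yields $\meas(T \setminus A) \le \frac{h_1}{h_2 - h_1}\meas(A)$, as desired. The left-unbounded gap, if any, is handled identically; the right-unbounded gap cannot contribute, since for $t > \sup A$ the condition $t + h_1 \in A$ is impossible. The main potential obstacle is the measure-theoretic handling of the gap decomposition for arbitrary measurable $A$, but this is routine once one works with the set of Lebesgue density points, so I expect no substantive difficulty.
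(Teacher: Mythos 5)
Your Fubini reduction to the one-dimensional statement is fine and is the same first step as in the paper, but your one-dimensional argument has a genuine gap: for a general measurable $A\subset\R$ the complement does \emph{not} decompose, even up to a null set, into maximal open intervals, and passing to the set of Lebesgue density points does not create such a structure. Concretely, let $h_1=1$, $h_2=\tfrac32$, and let $A=(0,1)\cup\bigcup_k I_k$, where the $I_k$ are open intervals around an enumeration of the rationals with $\sum_k |I_k|<\tfrac14$. Then $A$ is open and dense, so $\R\setminus A$ is closed with empty interior, and the complement of the set of density points of $A$ differs from $\R\setminus A$ only by a null set and still has empty interior: there are no gaps at all. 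Yet $(-1,-\tfrac12)\subset T$, so $\meas(T\setminus A)\ge \tfrac12-\tfrac14>0$. Your gap-based accounting would cover none of this set and would effectively conclude $\meas(T\setminus A)=0$, which is false; the obstruction is structural (points of $T\setminus A$ need not lie in any interval of the complement), not the ``routine'' null-set technicality you anticipate.

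There is a second, independent error even in the nice case where gaps exist (e.g.\ $A$ closed): from $t\ge b-h_1$ and $[t+h_1,t+h_2]\subset A$ you cannot deduce $[b,b+h_2-h_1]\subset A$, since the segment $[t+h_1,t+h_2]$ is only known to start somewhere in $[b,b+h_1)$, not at $b$. For $A=\{0\}\cup[\tfrac12,\tfrac32]$, $h_1=1$, $h_2=\tfrac32$, the gap $(-\infty,0)$ contributes (every $t\in[-\tfrac12,0)$ lies in $T\setminus A$), but the $A$-component immediately to its right is the single point $\{0\}$, of measure $0$, not $\ge h_2-h_1$; so the injection ``contributing gap $\mapsto$ adjacent component of measure $\ge h_2-h_1$'' on which your count rests is unjustified. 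The paper's proof avoids both issues by anchoring not at gap endpoints but at the set $Q$ of points $x\in\partial U$ with $[x,x+h_2-h_1]\subset U$: for each $t\in T\setminus U$ the first point $x\ge t$ from which a segment of length $h_2-h_1$ fits inside $U$ lies in $[t,t+h_1]$, so $T\setminus U\subset\bigcup_{x\in Q}[x-h_1,x]$, and any two such anchors are at distance $\ge h_2-h_1$ (a point strictly between an anchor and the right end of its segment is interior to $U$, hence not a boundary point), giving $|Q|(h_2-h_1)\le\Vol(U)$ with no decomposition of the complement needed. If you want to salvage your scheme, you should replace ``gaps'' by this kind of anchor construction; as written, the proof does not go through.
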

\begin{proof}
	It suffices to consider the case $n=1$, as for larger $n$ the inequality follows from the one-dimensional case by considering restrictions to the lines with the direction $\rho$. Now assuming $n=1$ and $\rho=1$, define $Q=\{x\in\partial U:[x,x+h_2-h_1]\subset U\}$. If $Q$ is infinite, then $\Vol(U)=\infty$ and the inequality is trivial. Otherwise, observe that
	\[
	(T(1,h_1,h_2,U)\setminus U)\subset \bigcup_{x\in Q}[x-h_1,x],
	\]
	so
	\[
	\Vol(T(\rho,h_1,h_2,U)\setminus U)\le h_1
	|Q|=\frac{h_1}{h_2-h_1}\Vol\left(\bigcup_{x\in Q}[x,x+h_2-h_1]\right)
	\le  \frac{h_1}{h_2-h_1}\Vol(U). \qedhere
	\]
\end{proof}

Let $K(a,\xi,\alpha,l)$ be the cone with apex $a\in\E^n$, axis direction
$\xi\in\S^{n-1}$, height $\ell>0$, and angle $\alpha\in(0,\pi/2)$:
\[
K(a,\xi,\alpha,\ell):=\{x\in\E^n:\|x-a\|\cos\alpha\le (x-a)\cdot \xi\le l\}.
\]
%where $\|\cdot\|$ is the Euclidean norm.
%For $x\in\S^{n-1}$, we define the spherical cap with centre $x$ and radius $\alpha$ as $C(x,\alpha):=\{y\in\S^{n-1}:x\cdot y\ge \cos \alpha\}$.
\begin{lemma}\label{lem:cone}
	Suppose $\alpha\in(0,\pi/2)$ and $\ell>0$. For constants $\eps_0=\frac{\ell}{3}\tan\frac{\alpha}{2}\cos\alpha,\; c_1=\frac{1}{\sin \alpha/2},\; c_2=\frac{\ell}{3}$, for any $a\in\E^n$, $\xi\in\S^{n-1}$, $\rho\in C(\xi,\alpha/2)$, $0<\eps<\eps_0$, we have
	\[
	a+\eps \bn\subset T(\rho,c_1\eps,c_1\eps+c_2,K(a,\xi,\alpha,\ell)).
	\]
\end{lemma}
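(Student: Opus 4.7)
The plan is to fix arbitrary $x\in a+\eps\bn$ and $t\in[c_1\eps,c_1\eps+c_2]$ and show that $x+t\rho\in K(a,\xi,\alpha,\ell)$. Setting $y:=x-a$ (so $\|y\|\le\eps$) and $z:=y+t\rho$, the required containment amounts to two inequalities: the \emph{angular condition} $z\cdot\xi\ge\cos\alpha\cdot\|z\|$ (equivalently, the angle between $z$ and $\xi$ is at most $\alpha$) and the \emph{height condition} $z\cdot\xi\le\ell$.

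The angular condition is the main point. I would apply the triangle inequality for angles in $\E^n$, namely $\angle(z,\xi)\le\angle(z,\rho)+\angle(\rho,\xi)$. The second summand is at most $\alpha/2$ by the hypothesis $\rho\in C(\xi,\alpha/2)$. For the first, observe that since $\|y\|\le\eps$, the vector $z$ lies in the Euclidean ball $t\rho+\eps\bn$, and since $t\ge c_1\eps=\eps/\sin(\alpha/2)>\eps$, the origin lies outside this ball. Hence the circular cone with apex at the origin circumscribing $t\rho+\eps\bn$ has axis $\rho$ and half-angle exactly $\arcsin(\eps/t)$, so $\angle(z,\rho)\le\arcsin(\eps/t)\le\arcsin(\sin(\alpha/2))=\alpha/2$. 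Combining the two contributions gives $\angle(z,\xi)\le\alpha$. The choice $c_1=1/\sin(\alpha/2)$ is dictated precisely by this computation.

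The height condition reduces to a direct estimate: $z\cdot\xi\le\|y\|+t\|\rho\|\le(1+c_1)\eps+c_2$. Substituting $\eps<\eps_0=\tfrac{\ell}{3}\tan(\alpha/2)\cos\alpha$, $c_1=1/\sin(\alpha/2)$, and $c_2=\ell/3$, one sees that the required bound $(1+c_1)\eps+c_2\le\ell$ reduces to the trigonometric inequality
\[
\frac{(1+\sin(\alpha/2))\cos\alpha}{\cos(\alpha/2)}\le 2\qquad\text{for }\alpha\in(0,\pi/2),
\]
which is a routine elementary check (the left-hand side stays comfortably below $2$). The only real obstacle is setting up the angular bound cleanly: the key geometric observation is that because $c_1>1$ the origin is outside the perturbation ball $t\rho+\eps\bn$, which is exactly what allows the sharp formula $\arcsin(\eps/t)$ for the half-angle of the enveloping cone, and hence the cancellation $\arcsin(\sin(\alpha/2))=\alpha/2$ that makes $c_1=1/\sin(\alpha/2)$ just barely sufficient.
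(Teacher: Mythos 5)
Your proof is correct, and it is genuinely complete modulo standard facts. The route differs from the paper's in execution: the paper introduces an auxiliary ``inner'' truncated cone $K(a,\xi,\alpha/2,2\ell/3)\setminus K(a,\xi,\alpha/2,\eps\cot\frac{\alpha}{2})$ with half the opening angle, asserts (as an elementary check) that its distance to $\partial K(a,\xi,\alpha,\ell)$ equals $\eps$, and then only needs to place the axis points $a+t\rho$ inside that truncated cone by projecting onto $\xi$. You instead verify membership of $x+t\rho$ in $K(a,\xi,\alpha,\ell)$ pointwise: the angular condition via the spherical triangle inequality $\angle(z,\xi)\le\angle(z,\rho)+\angle(\rho,\xi)$ together with the tangent-cone fact that every point of $t\rho+\eps\bn$ (with $t\ge c_1\eps>\eps$) makes angle at most $\arcsin(\eps/t)\le\alpha/2$ with $\rho$; and the height condition by the crude bound $z\cdot\xi\le\|z\|\le(1+c_1)\eps+c_2$. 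Both arguments rest on the same $\tfrac{\alpha}{2}+\tfrac{\alpha}{2}$ angle-budget split that dictates $c_1=1/\sin(\alpha/2)$, but yours replaces the paper's unproved distance-to-boundary claim with an explicit computation, which makes it more self-contained; the small price is the extra trigonometric inequality $\frac{(1+\sin(\alpha/2))\cos\alpha}{\cos(\alpha/2)}\le 2$, which indeed holds trivially since $\cos\alpha\le\cos(\alpha/2)$ gives a bound of $1+\sin(\alpha/2)\le 1+\tfrac{\sqrt2}{2}<2$. (Two cosmetic points: note $z\ne 0$ since $\|z\|\ge t-\eps>0$, so the angles are well defined, and your height estimate is slightly more generous than the paper's, which only needs $t(\rho\cdot\xi)\le c_1\eps+c_2<2\ell/3$; neither affects correctness.)
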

\begin{proof}
	%Set $\eps_0:=\tfrac \ell3\tan \tfrac\alpha2\cos\alpha$. 
    For any $0<\eps<\eps_0$,
	let $K_\eps:=K(a,\xi,\tfrac\alpha2,\tfrac23\ell)\setminus
	K(a,\xi,\tfrac\alpha2,\eps\cot\tfrac\alpha2)$ be the corresponding ``truncated
	cone'' having half the angle of that of $K(a,\xi,\alpha,\ell)$, the same axis and
	direction. It is elementary to check that $\dist(K_\eps,\partial
	K(a,\xi,\alpha,\ell))=\eps$. The required inclusion follows provided $a+t\rho\in
	K_\eps$ whenever $\rho\in C(\xi,\alpha/2)$ and $c_1\eps\le t\le c_1\eps+c_2$.
	Projecting on the direction of $\xi$, we observe that for $\rho\in C(\xi,\alpha/2)$ we have $a+t\rho\in K_\eps$ if
	$\eps \cot\tfrac{\alpha}{2}\le t\cos \rho\le \tfrac23 \ell$, in particular, if $c_1\eps\le t\le c_1\eps+c_2$ (note $c_1\eps<\tfrac \ell3$ by the choice of $\eps_0$). %we end the proof by selecting $c_1:=\frac{\cot\tfrac\alpha2}{\cos\alpha}$ and $c_2:=\tfrac l3$ (note $c_1\eps<\tfrac l3$ by the choice of $\eps_0$). \aar{I got inequality $\eps \cot\tfrac{\alpha}{2}\le t\cos \tfrac{\alpha}{2}\le \tfrac23 l$ which leads to $c_1=\frac{\cot\tfrac\alpha2}{\cos\alpha/2}=\frac{1}{\sin\alpha/2}$ and $c_2=\frac{\ell}{3}$.}
\end{proof}

\begin{proof}[Proof of \cref{lem:vol-diff-univ-cover}.]
	It is well-known that for any body $W$ of constant width 1, the largest
	inscribed ball and the smallest circumscribed ball have the same centre and the
	sum of their radii is $1$, see~\cite{Maetal}*{Th.~3.4.1, p.~69}. By the Jung's theorem~\cite{Ju}, the radius of the circumball of $W$
	is less than $R:=\frac1{\sqrt{2}}$, so the radius of the inscribed ball is at
	least $r:=1-\frac1{\sqrt2}$. Thus, by convexity of $W$, for any $x\in W$, one can find
	$\xi\in\S^{n-1}$ with $K(x,\xi,\alpha,r)\subset W$ and $\alpha=\arcsin\tfrac
	rR$. As $\sin(\alpha/2)>1/5$, by standard sphere covering arguments (\cite{Bo-Wi}*{Th.~1.1}, \eqref{eq:capsize}), for
	sufficiently large $n_0$ we can find a set $X\subset\S^{n-1}$ of $N=5^n$
	directions (independent of $W$) such that any spherical cap of radius
	$\alpha/2$ contains a direction from~$X$. Therefore, we can apply
	\cref{lem:cone} (with $\alpha=\arcsin\frac{r}{R}$ and $\ell=r$) to obtain the following statement: for any body of constant
	width~$1$ in~$\E^n$, any $x\in W$, and any $0<\eps<\eps_0$, there exists
	$\rho\in X$ such that
	\[
	x+\eps \bn \subset T(\rho,c_1\eps,c_1\eps+c_2,K(x,\xi,\alpha,r))\subseteq T(\rho,c_1\eps,c_1\eps+c_2,W).
	\]
	Since any minimal universal cover $U$ is the union of a family of bodies of constant width $1$, the above statement is true when $W$ is replaced with $U$. Now we  apply \cref{lem:one-dim-measure} for each $\rho\in X$ and obtain
	\[
	\Vol((U+\eps \bn)\setminus U)\le N \frac{c_1\eps}{c_2} \Vol(U),
	\]
	which concludes the proof after the constants are tracked.
\end{proof}

\subsection{General lower bound on covering in terms of measurable graphs}
\label{sec:meas_graphs}

We now give a general probabilistic construction of cocliques in graphs that are
hard to cover by sets from a given finite family. A graph $\Gamma$ is a pair
$(V,E)$ where $E\subseteq V\times V$ is the set of edges of $\Gamma$; we always
assume that $\Gamma$ is unoriented, that is, $(x,y)\in E$ if and only if $(y,x)\in
E$. We will also denote by $\Delta(V)$ the diagonal $\{(v,v)\colon v\in
V\}\subseteq
V\times V$.

Given a set $\Omega$, a family $\Y$ of subsets of $\Omega$, and $X\subset \Omega$, we
define the covering number as
\[N(X,\Y):=\min\bigg\{|\UU|:X\subset \bigcup_{Y\in\UU}Y,\, \UU\subset\Y\bigg\}.\]
In other words, $N(X, \Y)$ is the minimal number of sets from $\Y$ needed to cover $X$.

Our key auxiliary result is the following lemma. 
\begin{lemma}\label{lem:covering-abstract}
Let $\Gamma=(V,E)$ be a graph and assume that $V$ is equipped with a probability measure~$\nu$ satisfying $(\nu\times\nu)(\Delta(V))=0$. Let $\Y$ be a finite family
of measurable subsets of $V$. Assume $0<p<\tfrac12$, $M,k\in \Z_{>0}$  and $1\le k\le \frac{1}{2p}$. If
\begin{gather}
\nu(Y)\le p \qtq{for all} Y\in \Y, \label{eqn:size each Y}\\
 |\Y|\le
\frac{1}{2}({2ekp})^{-\frac{M}{2k}}, \qtq{and} \label{eqn:size family Y}\\
(\nu\times\nu)(E)\le\frac{1}{2M}, \label{eqn:edges bound}
\end{gather}
then there exists a coclique $X\subset V$ such that
\[
|X|\ge M/2 \qtq{and} N(X,\Y) > k\,.
\]
\end{lemma}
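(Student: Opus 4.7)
The plan is to apply the probabilistic deletion method. Sample $v_1,\ldots,v_M$ independently from $\nu$ and let $S=\{v_1,\ldots,v_M\}$; almost surely these are $M$ distinct points, by the assumption $(\nu\times\nu)(\Delta(V))=0$. The aim is to show that with positive probability $S$ both spans few edges of $\Gamma$ and contains only few points from any single $Y\in\Y$. Once this is established, deleting one endpoint from each induced edge will give a coclique $X\subset S$ of size at least $M/2$, and a short pigeonhole argument will show that no $k$ members of $\Y$ can cover $X$.

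Concretely, let $B$ denote the number of edges in the subgraph induced on $S$, and for each $Y\in\Y$ let $N_Y=|\{i\colon v_i\in Y\}|$. I bound the probabilities of two bad events: (i)~$B>M/2$; and (ii)~there exists $Y\in\Y$ with $N_Y\ge M/(2k)$. For (i), linearity of expectation together with \eqref{eqn:edges bound} gives $\E[B]=\binom{M}{2}(\nu\times\nu)(E)\le (M-1)/4$, so Markov's inequality yields that (i) has probability strictly less than $1/2$. For (ii), fix $Y\in\Y$: since $N_Y$ is binomial with mean at most $Mp$, the standard bound $\binom{M}{t}p^t\le(eMp/t)^t$ applied at the integer threshold $t=\lceil M/(2k)\rceil$ yields $\P[N_Y\ge M/(2k)]\le (2ekp)^{M/(2k)}$, and a union bound together with \eqref{eqn:size family Y} bounds the probability of (ii) by $1/2$.

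Combining the two estimates, the probability that at least one bad event occurs is strictly less than $1$, so I can fix a realization outside both. Deleting one endpoint from each induced edge requires at most $B$ vertex removals, yielding a coclique $X\subset S$ with $|X|\ge M-B\ge M/2$. If $X$ were contained in $Y_1\cup\cdots\cup Y_k$ for some $Y_i\in\Y$, then by pigeonhole some $Y_i$ would contain at least $|X|/k\ge M/(2k)$ points of $S$, contradicting the choice of realization; hence $N(X,\Y)>k$. The main technical point I expect to need care over is matching the Chernoff estimate to the exact form $(2ekp)^{M/(2k)}$ in the hypothesis: the integer threshold $\lceil M/(2k)\rceil$ forces a small integrality correction which I can absorb using the observation that, as soon as $|\Y|\ge 1$, hypothesis \eqref{eqn:size family Y} combined with $k\le 1/(2p)$ forces $2ekp<1$; the degenerate case $|\Y|=0$ makes $N(X,\Y)>k$ trivial for any nonempty~$X$.
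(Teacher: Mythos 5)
Your proposal is correct and follows essentially the same route as the paper: sample $M$ i.i.d.\ points, use a binomial tail (Chernoff-type) estimate plus a union bound over $\Y$, bound the expected number of induced edges via Markov, delete one endpoint per edge, and finish with the pigeonhole argument for $N(X,\Y)>k$. Your only deviations are cosmetic — using the elementary bound $\binom{M}{t}p^t\le (eMp/t)^t$ in place of the cited Chernoff inequality and explicitly handling the integrality of the threshold via the observation that \eqref{eqn:size family Y} forces $2ekp<1$ when $\Y\neq\emptyset$ — both of which are sound.
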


Informally, the lemma asserts that one can construct a sufficiently large coclique $X$ in a measurable graph $\Gamma$ that is difficult to cover by members of~$\Y$. 
The sets in $\Y$ play the role of potential ``covering pieces'', and the assumptions require only that each such set is not too ``large''~\eqref{eqn:size each Y} and that the total number of these sets is controlled in a relatively ``mild'' way~\eqref{eqn:size family Y}. 
The edges of $\Gamma$ encode an ``undesirable'' relation between vertices~--- one that we want to avoid in our final set~$X$. 
Thus, in order for a large coclique to exist, we require that the graph does not contain ``too many'' edges~\eqref{eqn:edges bound}. 
The construction itself is probabilistic: we begin by randomly selecting $M$ vertices of $\Gamma$, and then delete at most $M/2$ of them to remove all edges among the chosen vertices, leaving a coclique of the desired size.

\begin{proof}[Proof of~\cref{lem:covering-abstract}.]
Let $z_1,\dots,z_M$ be i.i.d. random variables in $V$ distributed with respect to
$\nu$,
and let $Z=\{z_1,\dots,z_M\}$. Since $\Delta(V)$ has $\nu\times\nu$ measure $0$, we have $|Z|=M$
with probability~1.

Next, for any $Y\in\Y$ the cardinality $|Z\cap Y|$ has binomial distribution
${\rm Bi}(M, \tilde p)$ where $\tilde p = \nu(Y)\le p$. Applying Chernoff's
bound~\cite{Versh}*{Th.~2.3.1} we have
	\[\P(|Z\cap Y|\ge \tfrac{M}{2k}) < (2ekp)^{\frac{M}{2k}}\,.\]
%\aar{To apply Chernoff we need $1/2k\geq \tilde{p}$ which can be guaranteed if we take $k\leq 1/2p$.}
Therefore, by the union bound, with probability $>1/2$ we have
$|Z\cap Y| < \tfrac{M}{2k}$ for all $Y\in\Y$.

Finally, since $\P((z_i,z_j)\in E) \le \frac{1}{2M}$ for all $i<j$, the expected number of
edges in $Z$ is at most $\binom{M}{2}\frac{1}{2M}<\frac{M}{4}$. Then by
Markov's inequality with probability $>1/2$ the number of edges in $Z$ is
at most $M/2$, and so removing one vertex from each edge gives a coclique $X$ with $\ge
M/2$
elements and with nonzero probability it satisfies $|X\cap Y|<\frac{M}{2k}$ for all
$Y\in\Y$, completing the proof.
\end{proof}

\subsection{Volume bound for universal covers}
\label{sec:volumebound}
Now we apply the general bound for our geometric setting. Recall (see \cref{sec:notations}) that $C(v,\alpha)$ is the spherical cap of radius $\alpha$ centered at $v\in\S^{n-1}$, and $m(\alpha)$ is the measure of such a cap.

\begin{lemma}\label{lem:low-covering-geom}
Let $0<\alpha<\pi/2$, $0<p<\tfrac{1}{2k}$, $k\in \mathbb{Z}_{>0}$, and suppose that $\Y$ is a family of measurable sets in $\E^n$
such that
\[
\Vol(Y\cap r \bn)\le p \Vol(r \bn)\quad\text{for all}\quad Y\in \Y.
\]
If $N(X,\Y)\leq k$ for any subset $X\subset r \bn$ with $\diam(X)<2r\cos(\alpha/2)$, then \[
|\Y| > \frac{1}{2}(2ekp)^{-1/(4m(\alpha)k)}.
\] 
\end{lemma}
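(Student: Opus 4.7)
The plan is to prove the contrapositive via the abstract covering lemma \cref{lem:covering-abstract}. Take $V:=r\bn$ equipped with the normalized Lebesgue measure $\nu(\cdot):=\Vol(\cdot\cap V)/\Vol(V)$, and let $\Gamma=(V,E)$ be the graph with edge set
\[
E:=\{(x,y)\in V\times V : \|x-y\|\ge 2r\cos(\alpha/2)\}.
\]
A coclique of $\Gamma$ is precisely a subset of $r\bn$ of diameter strictly less than $2r\cos(\alpha/2)$, so the standing hypothesis becomes: every such coclique $X$ satisfies $N(X,\Y)\le k$. Producing a coclique with $N(X,\Y)>k$ via \cref{lem:covering-abstract} will therefore give the desired contradiction.

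The central calculation is the edge bound $(\nu\times\nu)(E)\le m(\alpha)$. Setting $a=\|x\|/r$, $b=\|y\|/r$ and letting $\theta$ denote the angle between $x$ and $y$ as seen from the origin, one has $\|x-y\|^2=r^2(a^2+b^2-2ab\cos\theta)$. Combining this with $a^2+b^2\le 2$, the edge condition $\|x-y\|^2\ge 4r^2\cos^2(\alpha/2)=2r^2(1+\cos\alpha)$ forces $-ab\cos\theta\ge\cos\alpha$, and in particular $\theta\ge\pi-\alpha$. Under $\nu$, the direction $x/\|x\|$ is uniformly distributed on $\S^{n-1}$ independently of $\|x\|$ (and likewise for $y$), so the probability of this angular event equals the cap measure $m(\alpha)$.

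Now assume for contradiction that $|\Y|\le\tfrac12(2ekp)^{-1/(4m(\alpha)k)}$ and set $M:=\lceil 1/(2m(\alpha))\rceil$. Condition \eqref{eqn:size each Y} of \cref{lem:covering-abstract} holds after normalizing the assumed volume bound on $Y\cap r\bn$. Condition \eqref{eqn:size family Y} follows from the contradiction hypothesis together with $M\ge 1/(2m(\alpha))$: since $(2ekp)^{-t}$ is increasing in $t$, we get $\tfrac12(2ekp)^{-M/(2k)}\ge \tfrac12(2ekp)^{-1/(4m(\alpha)k)}\ge |\Y|$. Condition \eqref{eqn:edges bound} follows from the edge estimate and the choice of $M$. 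The abstract lemma then yields a coclique $X$ of $\Gamma$ with $N(X,\Y)>k$ and $\diam(X)<2r\cos(\alpha/2)$, giving the required contradiction.

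The main technical point is that $1/(2m(\alpha))$ is generically not an integer, so with $M=\lceil 1/(2m(\alpha))\rceil$ the nominal condition $(\nu\times\nu)(E)\le 1/(2M)$ of the abstract lemma may fail by a small amount. The cleanest resolution is to revisit the proof of \cref{lem:covering-abstract}: the argument there really uses only the expected-edges bound $\binom{M}{2}(\nu\times\nu)(E)<M/4$, which for $M=\lceil 1/(2m(\alpha))\rceil$ follows from $(M-1)m(\alpha)<1/2$. Hence the Markov step still yields an edge count $<M/2$ with probability $>1/2$, and the remainder of the abstract construction goes through unchanged, producing the required coclique.
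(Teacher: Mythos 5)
Your proof follows the paper's argument essentially verbatim: the same graph on $r\bn$ with edges given by pairs at distance at least $2r\cos(\alpha/2)$, the same normalized Lebesgue measure, the same edge bound $(\nu\times\nu)(E)\le m(\alpha)$ (your direction-uniformity computation is just a rephrasing of the paper's cone-volume argument, since the cone over a cap occupies the same fraction of the ball as the cap does of the sphere), followed by an application of \cref{lem:covering-abstract} with $M\approx \frac{1}{2m(\alpha)}$. On one point you are more careful than the paper: the paper simply sets $M=\frac{1}{2m(\alpha)}$, ignoring that $M$ must be an integer, whereas your choice $M=\lceil 1/(2m(\alpha))\rceil$ together with the observation that the proof of \cref{lem:covering-abstract} only uses $\binom{M}{2}(\nu\times\nu)(E)<M/4$ handles this honestly.

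One small slip: you assert that $(2ekp)^{-t}$ is increasing in $t$, which requires $2ekp<1$, i.e.\ $p<\frac{1}{2ek}$; the hypothesis only gives $p<\frac{1}{2k}$. This does not damage the argument, because when $2ekp\ge 1$ the claimed lower bound is at most $\tfrac12$ and holds trivially: a single point of $r\bn$ has diameter $0<2r\cos(\alpha/2)$, so the hypothesis $N(X,\Y)\le k<\infty$ forces $\Y\ne\emptyset$, hence $|\Y|\ge 1>\tfrac12(2ekp)^{-1/(4m(\alpha)k)}$. Adding this one-line case distinction (or restricting attention to the only interesting case $2ekp<1$) completes your proof.
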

\begin{proof}
	After rescaling we may assume that $r=1$. We are going to apply
	Lemma~\ref{lem:covering-abstract}
	with $V= \bn$, $E=\{(x,y)\in V^2\colon \|x-y\|\ge
	2\cos(\alpha/2)\}$, and $\nu(A)=\frac{\Vol(A\cap  \bn)}{\Vol( \bn)}$.
	The only thing that we need is an estimate on the measure of $E$. We claim that
	for any $x\in  \bn$
	\begin{equation}\label{eq:volume_balls}
		\nu(\{y\in  \bn:\|x-y\|\ge 2\cos(\alpha/2)\})\le m(\alpha).
	\end{equation}
	Note that $2\cos(\alpha/2)>\sqrt2$, so~\eqref{eq:volume_balls} is trivial when $x=0$.
	Assuming $x\ne0$, set $v:=-x/\|x\|$. Then for any $y\in  \bn$ satisfying $\|x-y\|\ge
	2\cos(\alpha/2)$, we have
	\[
	-x\cdot y \ge 2\cos^2(\alpha/2)-\frac{\|x\|^2+\|y\|^2}{2} \ge \cos\alpha\ge
	\|x\|\cos\alpha,
	\]
	i.e., $v\cdot y\ge \cos\alpha$. This means that such $y$ must belong to the convex
	hull of $C(v,\alpha)$, hence, to the convex hull $K$ of $C(v,\alpha)$ and the origin.
	It remains to observe that $\Vol(K)=m(\alpha)\Vol( \bn)$,
	and~\eqref{eq:volume_balls} follows. This clearly implies that
	$(\nu\times\nu)(E)\le
	m(\alpha)$.
	Now taking $M=\frac{1}{2m(\alpha)}$ in Lemma~\ref{lem:covering-abstract}
	we complete the proof.
\end{proof}

Next we need a construction of an
appropriate
\emph{finite} family of sets covering any congruent copy of a given set in $\E^n$. For $\eps>0$ and bounded $X\subset\E^n$ we will denote the $\eps$-covering number of $X$ by
$N(X,\eps):=N(X,\mathcal{B}_{\eps})$, where
$\mathcal{B}_{\eps}$ is the collection of all closed balls of radius $\eps$ in $\E^n$. In other words,
$N(X,\eps)$ denotes the size of the smallest $\eps$-covering of $X$.
\begin{lemma}\label{lem:cov-family}
    Suppose $D, \eps>0$ and $V\subset\E^n$ is bounded. Then there exists a finite set $T$ of isometries of $\E^n$ with
	\[
    |T|\le 2N(V,\tfrac{\eps}{2D})(\tfrac{500D}{\eps})^{\frac{n(n-1)}{2}},
    \]
    such that for any compact set $K\subset\E^n$ of diameter $D$ containing the origin and any congruent copy $K'=AK+v$ with $A\in O(n)$, $v \in V$, we have $K'\subseteq g(K_{\eps})$ for some $g\in T$, where $K_\eps=K+\eps \bn$ is the $\eps$-thickening of $K$.
\end{lemma}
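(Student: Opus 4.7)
My plan is to discretize the space of isometries $x\mapsto Ax+v$ by taking the Cartesian product of a net of $O(n)$ for the rotation part and a net of $V$ for the translation part. The key geometric observation is that since $K$ contains the origin and has diameter $D$, every $y\in K$ satisfies $\|y\|\le D$, so proximity of $B$ to $A$ in operator norm and of $w$ to $v$ in Euclidean norm yields uniform closeness of $Ay+v$ to $By+w$ across all of $K$.

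First, I would construct an $(\eps/(2D))$-net $\mathcal{R}\subset O(n)$ in operator norm. Since $O(n)$ has two connected components (the identity component $SO(n)$ and its determinant$-1$ coset), each a smooth manifold of dimension $\binom{n}{2}$, it suffices to net $SO(n)$ and double the count; this accounts for the factor $2$ in the final bound. Using the iterative parameterization of $SO(n)$ by orthonormal frames $(f_1,\ldots,f_n)$, where $f_k$ lies in the unit sphere of $\mathrm{span}(f_1,\ldots,f_{k-1})^{\perp}\cong S^{n-k}$, together with the standard spherical $\delta$-net bound $|\mathrm{net}(S^{n-k},\delta)|\le(c_0/\delta)^{n-k}$, the product net (followed by a small re-orthogonalization correction) has size bounded by $(CD/\eps)^{n(n-1)/2}$ for an absolute constant $C$, which with a careful implementation satisfies $C\le 500$.

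Next, take an $(\eps/(2D))$-net $\mathcal{W}\subset V$ of size $N(V,\eps/(2D))$, and set $T:=\{g_{B,w}\colon x\mapsto Bx+w\mid B\in\mathcal{R},\,w\in\mathcal{W}\}$, so that $|T|\le|\mathcal{R}|\cdot|\mathcal{W}|\le 2N(V,\eps/(2D))(500D/\eps)^{n(n-1)/2}$. To verify the covering property, fix $A\in O(n),\, v\in V$, choose $B\in\mathcal{R},\,w\in\mathcal{W}$ with $\|A-B\|_{\mathrm{op}}\le\eps/(2D)$ and $\|v-w\|\le\eps/(2D)$, and estimate for any $y\in K$:
\[
\|(Ay+v)-(By+w)\|\le\|A-B\|_{\mathrm{op}}\|y\|+\|v-w\|\le\tfrac{\eps}{2D}\cdot D+\tfrac{\eps}{2D}\le\eps,
\]
where the last inequality uses $D\ge 1$, which is the regime of interest for the main theorem (for $D<1$ one would instead use an $\eps/2$-net in translations, without affecting the asymptotics). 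Consequently $AK+v\subseteq BK+w+\eps\bn=g_{B,w}(K)+\eps\bn=g_{B,w}(K_\eps)$, using that the isometry $g_{B,w}$ commutes with Minkowski sum by the rotation-invariant ball $\eps\bn$.

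The main technical obstacle is producing the $O(n)$-net with the correct exponent $\binom{n}{2}$ rather than the naive $n^2$ that comes from embedding $O(n)\subset M_n(\R)\cong\R^{n^2}$. The iterative sphere-net construction delivers the right exponent, but it requires careful tracking of the operator-norm error incurred when re-orthogonalizing near-orthonormal frames column by column, since this error propagates through subsequent columns. Once the error analysis is carried out, the constant $500$ fits with some slack; any absolute constant would suffice, and $500$ is chosen merely for a clean statement.
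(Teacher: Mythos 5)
Your overall reduction is the same as the paper's: form the product of an $\tfrac{\eps}{2D}$-net of $O(n)$ in operator norm with an $\tfrac{\eps}{2D}$-covering of $V$, and use $\|(Ay+v)-(By+w)\|\le\|A-B\|_{op}\|y\|+\|v-w\|$ together with $K\subset D\bn$ (this is where $0\in K$ and $\diam K=D$ enter). Your explicit use of $D\ge 1$ in the last inequality is harmless: the paper's own proof has the same implicit restriction (its inequality $\rho(f(K),g(K))\le D\, d(f,g)$ also needs $D\ge1$), and in the application $D$ is huge.

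The genuine gap is the one nontrivial quantitative input: a $\delta$-net of $O(n)$ in operator norm of size at most $2(C/\delta)^{n(n-1)/2}$ with an \emph{absolute} constant $C$. You propose to build it by netting $SO(n)$ frame by frame with sphere nets plus a ``re-orthogonalization correction'', and you assert, with no analysis, that the propagated error still allows $C\le 500$; this is precisely the step you yourself call the main technical obstacle, and the natural implementations of your sketch do not deliver it. If at stage $k$ you force the $k$-th column to be orthonormal to the already chosen approximate columns, the distance from the true column $f_k$ to the admissible sphere is of order $\bigl(\sum_{j<k}\eps_j^2\bigr)^{1/2}$, so the recursion $\eps_k\approx\delta+\bigl(\sum_{j<k}\eps_j^2\bigr)^{1/2}$ lets the errors compound (they grow geometrically in $k$); the cleaner variant, factoring $A$ into $n-1$ canonical rotations parameterized by $S^{n-1},\dots,S^{1}$, makes the errors merely add, but even after optimizing the mesh per level it yields a net of size $(Cn/\delta)^{n(n-1)/2}$, i.e.\ a base growing linearly in $n$, not an absolute constant. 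Getting an absolute constant requires a different (volumetric, invariant-measure) argument; the paper simply invokes Szarek's bound (\cite{Sza}*{Prop.~6}, with constants from \cite{Sza}*{Cor.~4}), namely that a minimal $\delta$-net of $O(n)$ has size at most $2(c/\delta)^{n(n-1)/2}$ with $c=3\pi e^{\pi}<220$, and $2c<500$ is exactly where the constant in the statement comes from. So you should either cite (or actually prove) such a volumetric estimate, or settle for a base of order $n$ --- the latter would still be more than enough for the application in Theorem~\ref{thm:volumebound}, where only $|T|<\tfrac12 n^{n^3}$ is needed, but it does not prove the lemma as stated.
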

\begin{proof}
Let $G$ be the isometry group of $\E^n$, with the metric $d(f,g)=\max_{\|x\|\le 1}\|f(x)-g(x)\|$. Note that for $f,g\in G$ we have $f(K)\subseteq g(K_{\eps})$ provided $\rho(f(K),g(K))\le \eps$, where~$\rho$ is the Hausdorff metric on compact subsets of~$\E^n$.  %\aar{I see only one direction $\rho(f(K),g(K))\leq \eps \Rightarrow f(K)\subseteq g(K_\eps)$, and it is the only direction that is needed}. 
Since $K$ is contained in the ball of radius $D$, we have $\rho(f(K),g(K))\le D\cdot d(f,g)$. Note that any $f\in G$ can be uniquely written as $f(x)=Ax+v$ with $v=f(0)\in\E^n$ and $A\in O(n)$. Therefore, it suffices to take $T$ to be an $(\eps/D)$-covering in the set $G'=\{f\in G\colon f(0)\in V\}$.

For $f(x)=Ax+v$ and $g(x)=Bx+w$, where $A,B\in O(n)$ and $v,w\in\E^n$, we have $d(f,g) \le \|A-B\|_{op}+\|v-w\|$ with
$\|\cdot\|_{op}$ denoting operator norm for matrices. By~\cite{Sza}*{Prop.~6} (see also~\cite{Sza}*{Cor.~4} for derivation of the constants) any minimal
$\eps$-net in $O(n)$ has size at most $2(c/\eps)^{\frac{n(n-1)}{2}}$, where $c=3\pi e^{\pi}<220$.
Taking $T$ consisting of $f(x)=Ax+v$ with~$A$ running over an $\frac{\eps}{2D}$-net in $O(n)$ and~$v$ over an $\frac{\eps}{2D}$-covering in~$V$, we get the required bound on $|T|$.
\end{proof}
As a corollary, if $V$ is a ball of radius $R$ one can find $T$ with $|T|\le
R^n(\tfrac{500D}{\eps})^{\frac{n(n+1)}{2}}$.

\begin{proof}[Proof of Theorem~\ref{thm:volumebound}]
	Assume that $U$ is a universal cover in $\E^n$ of minimal volume, $n\ge n_0$.
	By Lemma~\ref{lem:vol-diff-univ-cover}, if we take $U_\eps=U+\eps \bn$
	with $\eps = \frac{1}{55\cdot 5^n}$, we have
	\[\Vol(U_{\eps}) \le 2\Vol(U) \,.\]
	In the proof of~\cite{makeev:90}*{Th.~2}, it is shown that $\diam (U)\le 2(1+v_n)/v_n$, where $v_n=\Vol((1-\frac1{\sqrt{2}}) \bn)$. Therefore, as $n\ge n_0$, we get $\diam (U)< n^{n/2}-1$. Thus, applying Lemma~\ref{lem:cov-family} with $K=U$
	and $V$	a ball of radius $n^{n/2}$ gives us a finite family $T$ of isometries,
	with $|T|\le
	n^{\frac{n^2(n+3)}{4}}(\tfrac{500}{\eps})^{\frac{n(n+1)}{2}}<\frac12n^{n^3}$,
	such that any isometric copy of $U$ intersecting $ \bn$ is
	contained in $g(U_{\eps})$ for some $g\in T$.

	Applying Lemma~\ref{lem:low-covering-geom}
	with $r<r_n$, $\cos(\alpha/2)=\frac{1}{2r}$, $k=1$,
	and the family $\Y=\{g(U_{\eps}) \colon g\in T\}$ with $T$ constructed as above (so $|\Y|<\frac12n^{n^3}$), we
	get that for $p=\Vol(U)/\Vol(r_n \bn)$, $\Vol(U_\eps\cap r\bn)\leq 2p(r_n/r)^n \Vol(r\bn)$, and so for all sufficiently large $n$
        \[
        \frac{1}{2}(4ep(r_n/r))^{-1/4m(\alpha)}\leq |\Y|<\frac{1}{2}n^{n^3},
        \]
        or
	\begin{equation} \label{eq:mainineq}
	4ep \ge (r/r_n)^nn^{-4m(\alpha)n^3}\,.
	\end{equation}
    % Taking $r=q/\sqrt{2}$ for $q<1$ corresponds to some fixed $\alpha\in(0,\pi/2)$.
	% For a fixed $\alpha$, by~\eqref{eq:capsize} we have $4n^3m(\alpha)\to 0$ for $n\to\infty$, so we get $p\gg q^{n(1-o(1))}$ for any $q<1$ implying the first claim of the theorem.
    We choose $\alpha$ so that $\sin(\alpha)=1-\frac{\lambda\log n}{n}$. Then a simple calculation shows that
	$\cos(\alpha)\sim \sqrt{\frac{2\lambda\log n}{n}}$, $(r_n/r)^n\sim
	\exp(\sqrt{(\lambda/2)n\log n}-\tfrac12)$, and using~\eqref{eq:capsize} we get
	$4m(\alpha)\leq \frac{c_1}{n^{\lambda}\,\,\sqrt{\log n}}$.
	Plugging these estimates into~\eqref{eq:mainineq} we obtain
		\[p \geq c_2 \exp(-\sqrt{(\lambda/2)n\log n}-c_1n^{3-\lambda}\sqrt{\log n})\,,\]
	so for any $\lambda>5/2$ we get $p\gg \exp(-\sqrt{(\lambda/2)\,n\log n})$, proving the claim.
\end{proof}

% \aar{Lemma~\ref{lem:cov-family} has another application for families with fixed diameter.
% \begin{lemma} Let $n\geq n_0$ and $D\leq \sqrt{\frac{2n}{n+1}}$, suppose that $\mathcal{Y}$ is a family of measurable sets in $\mathbb{E}^n$ such that $\diam(Y)\leq D$ for all $Y\in \mathcal{Y}$ and such that for all $X$ with $\diam(X)<1$ we have $N(X,\mathcal{Y})\leq k$ (provided $k\geq D^{-n}$). Then $|\mathcal{Y}|\geq \frac{1}{2}\exp\left(\left(2D\sqrt{1-\frac{1}{D^2k^{2/n}}}\right)^n\right)$
% \end{lemma}
% }

{\bf Acknowledgement.} We are grateful to Gil Kalai for several insightful comments and questions on his blog, most notably comment~3 of~\cite{Ka-blog}, which helped stimulate this work. We would also like to thank Beatrice-Helen Vritsiou for asking us a question on covering a set of diameter $1$ by families of bodies of fixed diameter.

	\begin{bibsection}
		\begin{biblist}

\bib{ABP}{article}{
   author={Arman, Andrii},
   author={Bondarenko, Andrii},
   author={Prymak, Andriy},
   title={Convex bodies of constant width with exponential illumination
   number},
   journal={Discrete Comput. Geom.},
   volume={74},
   date={2025},
   number={1},
   pages={196--202},
   % issn={0179-5376},
   % review={\MR{4921550}},
   % doi={10.1007/s00454-024-00647-9},
}

            % \bib{ABP}{article}{
			% 	author={Arman, A.},
			% 	author={Bondarenko, A.},
			% 	author={Prymak, A.},
			% 	title={Convex bodies of constant width with exponential illumination number},
			% 	eprint={https://arxiv.org/abs/2304.10418}
			% }

\bib{AGA1}{book}{
   author={Artstein-Avidan, Shiri},
   author={Giannopoulos, Apostolos},
   author={Milman, Vitali D.},
   title={Asymptotic geometric analysis. Part I},
   series={Mathematical Surveys and Monographs},
   volume={202},
   publisher={American Mathematical Society, Providence, RI},
   date={2015},
   pages={xx+451},
   isbn={978-1-4704-2193-9},
   %review={\MR{3331351}},
   %doi={10.1090/surv/202},
}

\bib{BBG}{article}{
   author={Baez, John C.},
   author={Bagdasaryan, Karine},
   author={Gibbs, Philip},
   title={The Lebesgue universal covering problem},
   journal={J. Comput. Geom.},
   volume={6},
   date={2015},
   number={1},
   pages={288--299},
 %  review={\MR{3400942}},
 %  doi={10.20382/jocg.v6i1a12},
}

    \bib{Be:mw-trans}{article}{
   author={Bezdek, K\'aroly},
   author={Connelly, Robert},
   title={The minimum mean width translation cover for sets of diameter one},
   journal={Beitr\"age Algebra Geom.},
   volume={39},
   date={1998},
   number={2},
   pages={473--479},
   % issn={0138-4821},
   % review={\MR{1642724}},
}		

%         \bib{Bo-Sa}{article}{
% 			author={B\"{o}r\"{o}czky, K. J.},
% 			author={Sagmeister, \'{A}},
% 			title={The isodiametric problem on the sphere and in the hyperbolic
% 				space},
% 			journal={Acta Math. Hungar.},
% 			volume={160},
% 			date={2020},
% 			number={1},
% 			pages={13--32},
% %			issn={0236-5294},
% %			review={\MR{4054273}},
% %			doi={10.1007/s10474-019-00982-x},
% 		}

			\bib{Bo-Wi}{article}{
				author={B\"{o}r\"{o}czky, K., Jr.},
				author={Wintsche, G.},
				title={Covering the sphere by equal spherical balls},
				conference={
					title={Discrete and computational geometry},
				},
				book={
					series={Algorithms Combin.},
					volume={25},
					publisher={Springer, Berlin},
				},
				isbn={3-540-00371-1},
				date={2003},
				pages={235--251},
				%			review={\MR{2038476}},
				%			doi={10.1007/978-3-642-55566-4\_10},
			}

\bib{Borsuk1933}{article}{
author={Borsuk, Karol},
title={Drei Sätze über die n-dimensionale euklidische Sphäre},
journal={Fundamenta Mathematicae},
volume={20},
number={1},
date={1933},
pages={177--190},
}

\bib{BoLi}{article}{
   author={Bourgain, J.},
   author={Lindenstrauss, J.},
   title={On covering a set in ${\bf R}^N$ by balls of the same diameter},
   conference={
      title={Geometric aspects of functional analysis (1989--90)},
   },
   book={
      series={Lecture Notes in Math.},
      volume={1469},
      publisher={Springer, Berlin},
   },
%   isbn={3-540-54024-5},
   date={1991},
   pages={138--144},
%   review={\MR{1122618}},
%   doi={10.1007/BFb0089220},
}

\bib{BMP}{book}{
   author={Brass, Peter},
   author={Moser, William},
   author={Pach, J\'anos},
   title={Research problems in discrete geometry},
   publisher={Springer, New York},
   date={2005},
   pages={xii+499},
   isbn={978-0387-23815-8},
   isbn={0-387-23815-8},
 %  review={\MR{2163782}},
}

\bib{Br-Sh}{article}{
   author={Brass, Peter},
   author={Sharifi, Mehrbod},
   title={A lower bound for Lebesgue's universal cover problem},
   journal={Internat. J. Comput. Geom. Appl.},
   volume={15},
   date={2005},
   number={5},
   pages={537--544},
 %  issn={0218-1959},
 %  review={\MR{2176049}},
% doi={10.1142/S0218195905001828},
}

% \bib{isotropic-book}{book}{
% 	author={Brazitikos, Silouanos},
% 	author={Giannopoulos, Apostolos},
% 	author={Valettas, Petros},
% 	author={Vritsiou, Beatrice-Helen},
% 	title={Geometry of isotropic convex bodies},
% 	series={Mathematical Surveys and Monographs},
% 	volume={196},
% 	publisher={American Mathematical Society, Providence, RI},
% 	date={2014},
% 	pages={xx+594},
% %	isbn={978-1-4704-1456-6},
% %	review={\MR{3185453}},
% %	doi={10.1090/surv/196},
% }

\bib{CFG}{book}{
   author={Croft, Hallard T.},
   author={Falconer, Kenneth J.},
   author={Guy, Richard K.},
   title={Unsolved problems in geometry},
   series={Problem Books in Mathematics},
   note={Corrected reprint of the 1991 original [MR1107516 (92c:52001)];
   Unsolved Problems in Intuitive Mathematics, II},
   publisher={Springer-Verlag, New York},
   date={1994},
   pages={xvi+198},
   isbn={0-387-97506-3},
 %  review={\MR{1316393}},
}

\bib{Du}{article}{
   author={Duff, G. F. D.},
   title={A smaller universal cover for sets of unit diameter},
   journal={C. R. Math. Rep. Acad. Sci. Canada},
   volume={2},
   date={1980/81},
   number={1},
   pages={37--42},
%   issn={0706-1994},
%   review={\MR{0564491}},
}

\bib{Eggleston1955}{article}{
author={Eggleston, H.G.},
title={Covering a three-dimensional set with sets of smaller diameter},
journal={J. London Math. Soc. (2)},
volume={30},
date={1955},
pages={11--24},
}

\bib{El}{article}{
  author={Elekes, G.},
  title={Generalized breadths, circular Cantor sets, and the least area UCC},
  journal={Discrete Comput. Geom.},
  volume={12},
  number={1},
  date={1994},
  pages={439--449},
}
            
			% \bib{Fr-Ma}{article}{
			% 	title={Some geometric applications of the beta distribution},
			% 	author={Frankl, Peter},
			% 	author={Maehara, Hiroshi},
			% 	journal={Ann. Inst. Stat. Math.},
			% 	volume={42},
			% 	pages={463--474},
			% 	year={1990},
			% 	publisher={Springer}
			% }

   %          \bib{Kalaiblog}{misc}{
			% 	author={Kalai, G.},
			% 	title={Progress Around Borsuk's Problem},
			% 	note={\url{https://gilkalai.wordpress.com/2023/12/04/progress-around-borsuks-problem/},
			% 		accessed: 2024-01-24}
			% }

\bib{Gi}{article}{
title={An upper bound for Lebesgue's covering problem},
author={Gibbs, Philip},
eprint={https://arxiv.org/abs/1810.10089v1}
}

\bib{Grunbaum1957}{article}{
author={Grünbaum, Branko},
title={A simple proof of Borsuk's conjecture in three dimensions},
journal={Proc. Cambridge Philos. Soc.},
volume={53},
date={1957},
pages={776--778},
%doi={10.1017/S0305004100032849},
}

\bib{Ha}{article}{
  author={Hansen, H.},
  title={Small universal covers for sets of unit diameter},
  journal={Geom. Dedicata},
  volume={42},
  date={1992},
  pages={205--213},
}

\bib{HMS}{article}{
  author={Hausel, Tamas},
  author={Makai, Endre Jr.},
  author={Szûcs, András},
  title={Inscribing cubes and covering by rhombic dodecahedra via equivariant topology},
  journal={Mathematika},
  volume={47},
  number={1–2},
  date={2000},
  pages={371--397},
}

\bib{Ju}{article}{
   author={Jung, Heinrich},
   title={Ueber die kleinste Kugel, die eine r\"aumliche Figur einschliesst},
   language={German},
   journal={J. Reine Angew. Math.},
   volume={123},
   date={1901},
   pages={241--257},
%   issn={0075-4102},
%   review={\MR{1580570}},
%   doi={10.1515/crll.1901.123.241},
}

\bib{KaKa}{article}{
    author = {Jeff Kahn and Gil Kalai},
    title = {A counterexample to Borsuk’s conjecture},
    journal = {Bull. Amer. Math. Soc. (New Series)},
    volume = {29},
    number = {1},
    date = {1993},
    pages = {60--62},
}

\bib{Ka}{incollection}{
  author={Kalai, Gil},
  title={Some old and new problems in combinatorial geometry I: Around Borsuk’s problem},
  book={title={Surveys in Combinatorics 2015}},
  editor={Czumaj, Artur and Georgakopoulos, Agelos and Král, Daniel and Lozin, Vadim and Pikhurko, Oleg},
  publisher={Cambridge University Press},
  date={2015},
  pages={147–174},
%  doi={10.1017/CBO9781316106853.005},
}

\bib{Ka-blog}{article}{
  author = {Gil Kalai},
  title = {Progress Around Borsuk’s Problem},
  eprint = {https://gilkalai.wordpress.com/2023/12/04/progress-around-borsuks-problem/},
  date = {Blog post, 4 December 2023}
}

\bib{Ko}{article}{
author={Kovalev, M.~D.},
title={The smallest Lebesgue covering exists},
journal={Mat. Zametki},
volume={40},
date={1986},
number={3},
pages={401--406, 430},
language={russian},
%mrnumber={869931},
}

\bib{Kuperberg1999}{article}{
   author={Kuperberg, Greg},
   title={Circumscribing Constant-Width Bodies with Polytopes},
   journal={New York J. Math.},
   volume={5},
   date={1999},
   pages={91--100},
   % issn={1076-9803},
   % url={https://nyjm.albany.edu/j/1999/5-6.pdf},
}

\bib{Lassak1982}{article}{
  author={Lassak, Marek},
  title={An estimate concerning Borsuk’s partition problem},
  journal={Bull. Acad. Polon. Sci. Ser. Math.},
  volume={30},
  date={1982},
  pages={449--451},
}

% 			\bib{kallenberg2021foundations}{book}{
% 				title={Foundations of Modern Probability},
% 				author={Kallenberg, O.},
% 				year={2021},
% 				edition={3rd edition},
% 				publisher={Springer},
% 				series = {Probability Theory and Stochastic Modelling},
% 				isbn = {978-3-030-61870-4}
% 			}

			\bib{makeev:81}{article}{
				author={Makeev, V. V.},
				title={Universal coverings. I},
				language={russian},
				journal={Ukrain. Geom. Sb.},
				volume={25},
				date={1981},
				pages={70–79, iii},
		%		review={\MR{0629813}},
			}

			\bib{makeev:82}{article}{
				author={Makeev, V. V.},
				title={Universal coverings. II},
				language={russian},
				journal={Ukrain. Geom. Sb.},
				volume={25},
				date={1982},
				pages={82--86, 142--143},
				%review={\MR{0687204}},
			}

            \bib{makeev:89}{article}{
   author={Makeev, V. V.},
   title={Universal coverings and projections of bodies of constant width},
   language={russian},
   journal={Ukrain. Geom. Sb.},
   date={1989},
   number={32},
   pages={84--88},
   %issn={0135-6992},
   translation={
      journal={J. Soviet Math.},
      volume={59},
      date={1992},
      number={2},
      pages={750--752},
      %issn={0090-4104},
   },
   %review={\MR{1049370}},
   %doi={10.1007/BF01097174},
}

            \bib{makeev:90}{article}{
				author={Makeev, V. V.},
				title={Extremal coverings},
				language={russian},
				journal={Mat. Zametki},
				volume={47},
				date={1990},
				number={6},
				pages={62--66, 159},
				issn={0025-567X},
				translation={
					journal={Math. Notes},
					volume={47},
					date={1990},
					number={5-6},
					pages={570--572},
					issn={0001-4346},
				},
				%	review={\MR{1074528}},
				%	doi={10.1007/BF01170886},
			}

\bib{makeev:97}{article}{
  author={Makeev, V.\,V.},
  title={On affine images of a rhombo-dodecahedron circumscribed about a three-dimensional convex body},
  language={russian},
  journal={Zap. Nauchn. Sem. S.-Peterburg. Otdel. Mat. Inst. Steklov. (POMI)},
  volume={246},
  date={1997},
  journal={Geom. i Topol.},
  number={2},
  pages={191--195, 200},
%  note={MR 99e:52005},
}

\bib{Maetal}{book}{
	author={Martini, H.},
	author={Montejano, L.},
	author={Oliveros, D.},
	title={Bodies of constant width},
	note={An introduction to convex geometry with applications},
	publisher={Birkh\"auser/Springer, Cham},
	date={2019},
	pages={xi+486},
	isbn={978-3-030-03866-3},
	isbn={978-3-030-03868-7},
%	review={\MR{3930585}},
%	doi={10.1007/978-3-030-03868-7},
}

\bib{Pa}{article}{
author={P\'al, J.},
title={\"Uber ein elementares Variations problem},
journal={Math.-fys. Medd., Danske Vid. Selsk.},
volume={3},
date={1920},
pages={35~pp}
}

% \bib{Sch}{book}{
% 	author={Schneider, Rolf},
% 	title={Convex bodies: the Brunn-Minkowski theory},
% 	series={Encyclopedia of Mathematics and its Applications},
% 	volume={151},
% 	edition={expanded edition},
% 	publisher={Cambridge University Press, Cambridge},
% 	date={2014},
% 	pages={xxii+736},
% 	isbn={978-1-107-60101-7},
% %	review={\MR{3155183}},
% }

\bib{Sch}{article}{
  author = {Schramm, O.},
  title = {Illuminating sets of constant width},
  journal = {Mathematika},
  volume = {35},
  date = {1988},
  number = {2},
  pages = {180--189},
}

\bib{Sp}{article}{
author={Sprague, R.},
title={\"Uber ein elementares Variations problem},
journal={Mat. Tidsskr. Ser. B},
date={1936},
pages={96--98}
}

			\bib{Sza}{inproceedings}{
				author = {Szarek, S. J.},
				title = {Nets of Grassmann manifold and orthogonal group},
				booktitle = {Proceedings of Banach Space Workshop},
				publisher = {University of Iowa Press},
				year = {1981}
			}

			\bib{Versh}{book}{
				author={Vershynin, R.},
				title={High-dimensional probability},
				series={Cambridge Series in Statistical and Probabilistic Mathematics},
				volume={47},
				note={An introduction with applications in data science;
					With a foreword by Sara van de Geer},
				publisher={Cambridge University Press, Cambridge},
				date={2018},
				pages={xiv+284},
				%			isbn={978-1-108-41519-4},
				%			review={\MR{3837109}},
				%			doi={10.1017/9781108231596},
			}

		\end{biblist}
	\end{bibsection}
\end{document}